\theoremstyle{definition}
\newtheorem{definition}{Definition}[section]
\newtheorem{theorem}{Theorem}[section]
\newtheorem{lemma}[theorem]{Lemma}
\begin{document}




\title{THE ABELIAN DISTRIBUTION}

\author{ANNA LEVINA}

\address{Max Planck Institute for Mathematics in Sciences\\
Kreutzstrasse 21, 04287 Leipzig, Germany\\
Bernstein Center for Computational Neuroscience, G\"{o}ttingen\\
Am Fassberg 12, 37077 G\"{o}ttingen, Germany\\
anna@nld.ds.mpg.de}

\author{J. MICHAEL HERRMANN}

\address{IPAB, School of Informatics, University of Edinburgh\\
10 Crichton St, Edinburgh, EH8 9AB, U.K.\\
j.michael.herrmann@gmail.com}

\begin{abstract}
We define the Abelian distribution and study its basic properties. Abelian distributions arise 
in the context of neural modeling and describe the size of neural avalanches in fully-connected 
integrate-and-fire models of self-organized criticality in neural systems. 
\end{abstract}
\keywords{power-law distribution; stable distribution; Abelian sum}

\maketitle



\section{Introduction}\label{introduction}
In the present manuscript we introduce Abelian distributions. 
We have called the distribution Abelian because of a number of identities that arise 
in analysis and that resemble the Abel identity 
$  (x+y)^n=\sum_{i=0}^n {n \choose i} x (x-iz)^{i-1}(y+iz)^{n-i}$~\cite{Saslaw1989}. 
This distribution appeared in 2002
in the study of a fully connected neural network~\cite{Eurich2002} as a distribution of sizes of ``avalanches'' of 
neural activity. Apart from Ref.~\cite{LevinaDiss}, so far there
no systematic and accessible study of the distribution has been published. 
The related results that were reported in the context of Cayley's theorem~\cite{Denker2011} are also based on Ref.~\cite{LevinaDiss}. 
Here we will discuss the basic properties of this probability mass distribution and describe its importance for the applications
in theoretical physics and biology.

\section{Definition}

\begin{definition}
Let $N\in\mathbb{N}$, $\alpha\in(0,1)$. The Abelian distribution is defined for $0\leq L\leq N$ by
\begin{equation} \label{dist_def1}
P_{\alpha,N}(L) = C_{\alpha,N} {{N\choose L}}\left(L\frac{\alpha}{N}\right)^{L-1} \left(1-L\frac{\alpha}{N}\right)^{N-L-1},
\end{equation}
where 
\begin{equation}
C_{\alpha,N}= \frac{1-\alpha}{N-(N-1)\alpha} \label{norm_const},
\end{equation} 
is a normalizing constant.
\end{definition}
Because $P_{\alpha,N}(0) =0$ we will in the following often assume that $L>0$.

\begin{lemma} \label{dist_def2}
The Abelian distribution defined by (\ref{dist_def1}),(\ref{norm_const}) is a probability distribution.
\end{lemma}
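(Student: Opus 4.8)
The statement requires two things: that every value $P_{\alpha,N}(L)$ is non-negative, and that $\sum_{L=0}^{N}P_{\alpha,N}(L)=1$. The first is the easy part. The plan is to check the sign of each factor separately for $1\le L\le N$: the binomial coefficient is positive, the base $L\alpha/N$ lies in $(0,\alpha)\subset(0,1)$ so $(L\alpha/N)^{L-1}\ge 0$, and since $L\alpha/N\le\alpha<1$ the base $1-L\alpha/N$ is strictly positive, making $(1-L\alpha/N)^{N-L-1}>0$ even in the case $L=N$ where the exponent is $-1$. Finally $C_{\alpha,N}=(1-\alpha)/(N-(N-1)\alpha)>0$, because $N-(N-1)\alpha=N(1-\alpha)+\alpha>0$. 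Together with the convention $P_{\alpha,N}(0)=0$ this yields non-negativity.

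The real content is normalization, i.e. showing
\begin{equation*}
S:=\sum_{L=1}^{N}\binom{N}{L}\left(\frac{L\alpha}{N}\right)^{L-1}\left(1-\frac{L\alpha}{N}\right)^{N-L-1}=\frac{1}{C_{\alpha,N}}=\frac{N-(N-1)\alpha}{1-\alpha}.
\end{equation*}
The key algebraic move is to set $z=\alpha/N$ and $y=1-\alpha$ and to rewrite the second base as $1-L\alpha/N=(1-\alpha)+(N-L)\alpha/N=y+(N-L)z$, so that $S=\sum_{L=1}^{N}\binom{N}{L}(Lz)^{L-1}(y+(N-L)z)^{N-L-1}$. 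In this form $S$ is exactly the $k\ge 1$ part of the symmetric Abel (Hurwitz) identity
\begin{equation*}
\sum_{k=0}^{n}\binom{n}{k}(x+kz)^{k-1}(y+(n-k)z)^{n-k-1}=\left(\frac{1}{x}+\frac{1}{y}\right)(x+y+nz)^{n-1}.
\end{equation*}

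My plan is to take this identity with $n=N$ and let $x\to 0$. The $k=0$ term on the left and the $1/x$ term on the right both diverge like $x^{-1}(y+Nz)^{N-1}$; after cancelling them and expanding $(x+y+Nz)^{N-1}$ to first order in $x$, the finite remainder yields
\begin{equation*}
S=(N-1)(y+Nz)^{N-2}+\frac{1}{y}\,(y+Nz)^{N-1}.
\end{equation*}
Since $y+Nz=(1-\alpha)+\alpha=1$, all the powers collapse and $S=(N-1)+\tfrac{1}{1-\alpha}=\frac{N-(N-1)\alpha}{1-\alpha}$, which is precisely $1/C_{\alpha,N}$; multiplying through by $C_{\alpha,N}$ gives $\sum_{L}P_{\alpha,N}(L)=1$.

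I expect the main obstacle to be having the exact Abel identity in hand and rigorously justifying the $x\to 0$ limit (in particular tracking that the singular $x^{-1}$ parts cancel and the first-order term survives). If one prefers to avoid invoking the symmetric identity as a black box, the cleaner route is to establish the specialized finite identity $\sum_{L=1}^{N}\binom{N}{L}(Lz)^{L-1}(y+(N-L)z)^{N-L-1}=(N-1)(y+Nz)^{N-2}+y^{-1}(y+Nz)^{N-1}$ directly by induction on $N$, which may be seen as a differentiated limiting form of the Abel binomial identity quoted in the introduction. Once that identity is in place, the specialization $y+Nz=1$ does all the remaining work and the normalization follows.
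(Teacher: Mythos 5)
Your proposal is correct, but it takes a genuinely different route from the paper. The paper's proof is elementary and self-contained: after substituting $x=\alpha/N$ and splitting off the $L=N$ term, it expands $(1-Lx)^{N-L-1}$ by the binomial theorem, collects the coefficient of $x^i$ as a polynomial $P_i(N)$ of degree $i$ in $N$, and identifies $P_i(N)=N^{i-1}$ by evaluating at the $i+1$ interpolation points $N=1,\dots,i+1$ (where all but one term of the defining sum vanish); the normalization then follows by summing a geometric series. You instead rewrite $1-L\alpha/N=y+(N-L)z$ with $y=1-\alpha$, $z=\alpha/N$, recognize the sum as the $k\ge 1$ part of the symmetric Abel--Hurwitz identity
\begin{equation*}
\sum_{k=0}^{n}\binom{n}{k}(x+kz)^{k-1}(y+(n-k)z)^{n-k-1}=\Bigl(\tfrac{1}{x}+\tfrac{1}{y}\Bigr)(x+y+nz)^{n-1},
\end{equation*}
and extract the answer by letting $x\to 0$ after cancelling the singular parts; the specialization $y+Nz=1$ then collapses everything to $S=(N-1)+1/(1-\alpha)=1/C_{\alpha,N}$, which is the right value. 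Your limit argument is unproblematic: after subtracting the $k=0$ term both sides are polynomials in $x$, so setting $x=0$ is legitimate. The trade-off is that your route is shorter and conceptually transparent (it also explains the name of the distribution, tying it to the Abel identity quoted in the introduction) but imports the Hurwitz identity as a black box, whereas the paper's interpolation argument is longer but uses nothing beyond the ordinary binomial theorem. Note that the paper itself remarks, immediately after its proof, that the lemma ``can also be proved by using a generalized binomial theorem,'' which is essentially your approach; to make your version fully self-contained you would still need to supply the inductive proof of the symmetric identity that you sketch at the end. Your verification of non-negativity (including the $L=N$ case with exponent $-1$) is correct and is a point the paper passes over silently.
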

\begin{proof}
 We have to show that
 \begin{equation}
\sum_{L=1}^N C_{\alpha,N}{{N \choose L}}\left(L\frac{\alpha}{N}\right)^{L-1} \left(1-L\frac{\alpha}{N}\right)^{N-L-1}=1. 
\nonumber
 \end{equation}
Introducing a new continuous variable $x$ instead of $\alpha/N$, we get
 \begin{equation}
\sum_{L=1}^N {{N \choose L}}\left(L x\right)^{L-1} \left(1-Lx\right)^{N-L-1}=\frac{1}{C_{\alpha,N}}, 
\nonumber
 \end{equation}
which is equivalent to 
 \begin{equation}
\sum_{L=1}^{N-1} {{N \choose L}}\left(L x\right)^{L-1} \left(1-L x\right)^{N-L-1}=
\frac{1}{C_{\alpha,N}}-\frac{(Nx)^{N-1}}{1-Nx}. \label{c3}
 \end{equation}
We can expand the sum on the left side of (\ref{c3}) and obtain
 \begin{equation}
\sum_{L=1}^{N-1}{{N \choose L}}\left(L x\right)^{L-1}
\sum_{m=0}^{N-L-1}(-1)^m {N-L-1 \choose m}(Lx)^m. \label{c4} 
 \end{equation} 
Introducing $k=L$ we can rewrite the sum in the previous expression as a polynomial in $x$
\begin{equation*}
\sum_{i=0}^{N-2}x^i\sum_{k=0}^{i} (-1)^{i-k}{{N \choose k}}{N-k-1 \choose i-k}(k)^{i}=\sum_{i=0}^{N-2}P_i(N) x^i,
\end{equation*}
where $P_i(N)$ is a polynomial in $N$ of degree $i$. For every $N$ we have $P_0(N)=1$. Consider now $i>0$. To identify uniquely 
the polynomial $P_i(N)$ it is sufficient to find its values in $i+1$ different points that we select to be $N=1,\ldots,i+1$.
Because ${{N-1 \choose k}}=0$ for $k>N-1$, we have also ${N-k-2 \choose i- k}=0$ for $N<i+2$ for any $k<N-1$.
Hence, 
 \begin{equation*}
P_i(N)=(-1)^{i-k}{{N \choose N}}{-1 \choose i- k}N^{i} =N^{i} \; \mathrm{for }\; N=1,\ldots,i+1 \;\mathrm{and}\;i>0.
\end{equation*} 
This means that $P_i(N)=N^{i-1} $ for any $N$ and $i>0$. Therefore the left side of (\ref{c4}) is 
 \begin{equation}
1+\sum_{i=1}^{N-2}x^i N^{i-1}=1+x\frac{1-(Nx)^{N-2}}{1-Nx}. \label{c5}
\end{equation}
Inserting (\ref{c5}) and (\ref{norm_const}) into (\ref{c3}) we arrive at
 \begin{equation*}
1+x\frac{1-(Nx)^{N-2}}{1-Nx}=\frac{N-(N-1)\alpha}{N(1-\alpha)}-\frac{(Nx)^{N-1}}{N(1-Nx)},
\end{equation*}
which holds for any $N$ and $\alpha<1$. 
\end{proof}
The authors of Ref.~\cite{Denker2011} mention that the theorem can also be proved by using a generalized binomial theorem.

\begin{figure}[htbp]
  \begin{center}
    \includegraphics[width=0.8\textwidth]{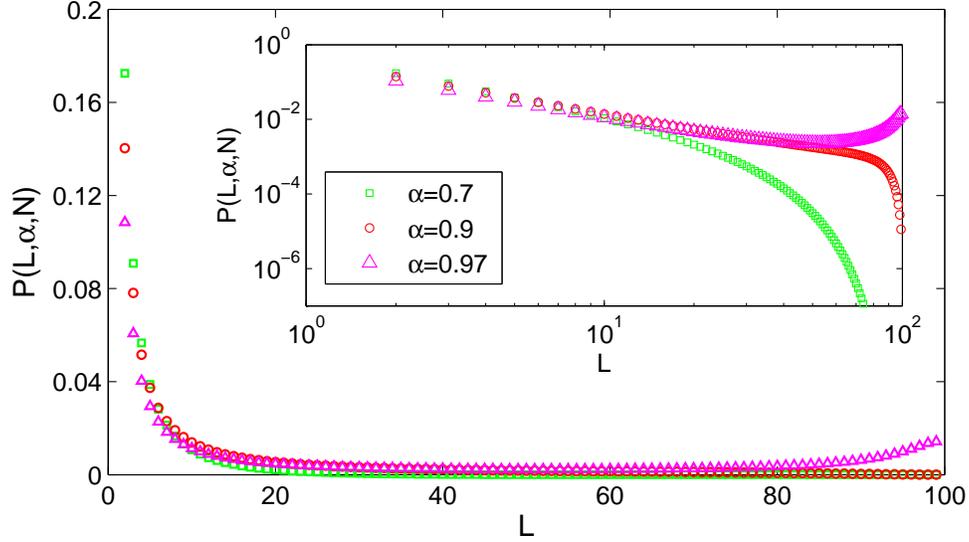} 
  \end{center}
\vspace*{8pt}
\caption{A probability mass function obeying the Abelian distribution for $N=100$ and 
several values of the parameter $\alpha$. Scales are linear (main figure) and double logarithmic (inset).
}\label{Fig:prob_mass}
\end{figure}

An Abelian-distributed probability mass function is shown in Fig.~\ref{Fig:prob_mass} for several values of the parameter $\alpha$. 
For small values of parameter $\alpha<0.9$ 
distribution is monotone and is dominated by approximately exponential decay, for $\alpha \lessapprox 1$ distribution is 
non-monotonous. For some small interval of parameter values  $\alpha \approx 0.9$ the distribution closely resembles 
a power-law (with exponential cutoff at large $L$), see the double logarithmic plot in the inset.
If a sample of data-points of size $10^5$ is drawn from this distribution, the hypothesis of an underlying
power-law distribution cannot be rejected~\cite{LevinaDiss}. 

The shape of the distribution varies in a similar way for all $N$, although for large $N$ the non-monotonous regime is 
present only for $\alpha\in\left(\alpha_{\mathrm{crit}}(N),1\right)$ where the value of $\alpha_{\mathrm{crit}}(N)$ 
has been numerically found to behave roughly as $1-\frac{1}{\sqrt{N}}$.

\section{Expected value}

We will now consider the moments of the Abelian distribution.

\begin{theorem}\label{Th:expectation}
Suppose $\xi$ has an Abelian distribution with parameters $\alpha$ and $N$, then 
\begin{equation*} 
E\xi =\frac{N}{N-(N-1)\alpha}.
\end{equation*}
\end{theorem}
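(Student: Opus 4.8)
The plan is to reduce the theorem to a single rational-function identity and then prove that identity by the same polynomial-collapse technique used in Lemma~\ref{dist_def2}. Writing the continuous variable $x=\alpha/N$ as in that lemma, we have
\[
E\xi=\sum_{L=1}^{N}L\,P_{\alpha,N}(L)=C_{\alpha,N}\,S,\qquad S:=\sum_{L=1}^{N}L\binom{N}{L}(Lx)^{L-1}(1-Lx)^{N-L-1}.
\]
Since $C_{\alpha,N}=\frac{1-\alpha}{N-(N-1)\alpha}$ and $1-\alpha=1-Nx$, establishing $E\xi=\frac{N}{N-(N-1)\alpha}$ is equivalent to proving
\[
S=\frac{N}{1-Nx},
\]
which I would treat, exactly as in the normalization lemma, as an identity of rational functions in the formal variable $x$.

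First I would isolate the only singular term, $L=N$, which contributes $\frac{N(Nx)^{N-1}}{1-Nx}$, so that it remains to show that the polynomial part satisfies
\[
\sum_{L=1}^{N-1}L\binom{N}{L}(Lx)^{L-1}(1-Lx)^{N-L-1}=\frac{N}{1-Nx}-\frac{N(Nx)^{N-1}}{1-Nx}=\sum_{i=0}^{N-2}N^{i+1}x^{i}.
\]
Expanding $(1-Lx)^{N-L-1}$ by the binomial theorem and collecting the coefficient of $x^{i}$ (with $k:=L$ and $m=i-k+1$, the weight $L$ simply raising the power of $k$ by one relative to the normalization computation) gives a polynomial in $N$,
\[
Q_i(N)=\sum_{k=1}^{i+1}(-1)^{i-k+1}\binom{N}{k}\binom{N-k-1}{i-k+1}k^{i+1},
\]
of degree at most $i+1$, and the goal becomes the combinatorial identity $Q_i(N)=N^{i+1}$ for $0\le i\le N-2$.

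This identity is the heart of the proof and the main obstacle. I would prove it by interpolation, mirroring the treatment of $P_i(N)$ in Lemma~\ref{dist_def2} but being careful about the degree count: here $Q_i$ has degree $i+1$, so $i+2$ sample points are needed, one more than the normalization required. For $N=1,\dots,i+1$ the factor $\binom{N-k-1}{i-k+1}$ vanishes for every $k<N$ while $\binom{N}{k}$ vanishes for $k>N$, so only $k=N$ survives and, using $\binom{-1}{i-N+1}=(-1)^{i-N+1}$, one obtains $Q_i(N)=N^{i+1}$ at those $i+1$ points. The extra point is $N=0$, where $\binom{0}{k}=0$ for all $k\ge1$ forces $Q_i(0)=0=0^{i+1}$. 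Since $Q_i(N)-N^{i+1}$ then has degree at most $i+1$ and vanishes at the $i+2$ points $N=0,1,\dots,i+1$, it is identically zero. (Equivalently, one could supply the single missing constraint by matching leading coefficients through the finite-difference identity $\sum_{k}(-1)^{n-k}\binom{n}{k}k^{n}=n!$.)

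Finally I would reassemble the pieces: summing the geometric series gives $\sum_{i=0}^{N-2}N^{i+1}x^{i}=N\frac{1-(Nx)^{N-1}}{1-Nx}$, and adding back the $L=N$ term cancels the $(Nx)^{N-1}$ contributions to yield $S=\frac{N}{1-Nx}$. Multiplying by $C_{\alpha,N}$ and substituting $x=\alpha/N$ then gives $E\xi=\frac{N}{N-(N-1)\alpha}$, as claimed.
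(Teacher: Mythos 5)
Your proposal is correct and follows essentially the same route as the paper's proof: substitute $x=\alpha/N$, split off the singular $L=N$ term, expand binomially, compare coefficients of $x^i$, and establish the resulting identity as a polynomial identity in $N$ by interpolation at small integers where all but one term of the sum collapses. The only difference is that the paper first absorbs the factor $L$ via $L\binom{N}{L}=N\binom{N-1}{L-1}$, which lowers the degree of the interpolated polynomial to $i$ and makes the points $N=1,\dots,i+1$ suffice, whereas you keep the degree at $i+1$ and correctly supply the extra point $N=0$.
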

\begin{proof} From (\ref{dist_def1}) and Lemma~\ref{dist_def2} we have
 \begin{equation*}
E\xi =\sum_{L=1}^N L^{L-1}{{N-1 \choose L-1}}\left(\frac{\alpha}{N}\right)^{L-1} \left(1-L\frac{\alpha}{N}\right)^{N-L-1}\frac{N(1-\alpha)}{N-(N-1)\alpha}.
\end{equation*}
We have to prove that
 \begin{equation}
\sum_{L=1}^N L^{L-1}{{N-1 \choose L-1}}\left(\frac{\alpha}{N}\right)^{L-1} \left(1-L\frac{\alpha}{N}\right)^{N-L-1}=\frac{1}{1-\alpha}. 
\nonumber
\end{equation}
Using again $x=\alpha/N$ we can rewrite this equation as
 \begin{equation}
\sum_{L=1}^N{{N-1 \choose L-1}}(Lx)^{L-1} \left(1-Lx \right)^{N-L-1}=\frac{1}{1-N x}. \label{2prove2}
\end{equation}
Transforming the sum in (\ref{2prove2}) we obtain 
\begin{equation*}
\sum_{L=1}^{N-1}{{N-1 \choose L-1}}(Lx)^{L-1} \left(1-Lx \right)^{N-L-1}+(Nx)^{N-1}(1-Nx)^{-1}=\frac{1}{1-Nx}. 
\end{equation*}
which is equivalent to
 \begin{equation}
\sum_{L=1}^{N-1}{{N-1 \choose L-1}}(Lx)^{L-1} \left(1-Lx \right)^{N-L-1}=\sum_{i=0}^{N-2}(Nx)^{i}. \label{2prove3} 
\end{equation} 
Both the left and the right side of the equation (\ref{2prove3}) are polynomials in $x$ of degree $N-2$. Hence in order to prove 
that equation (\ref{2prove3}) is an identity it is sufficient to show that the coefficients of $x^i$ on the both sides are equal
for every $i$. In other words, we have to show that
 \begin{equation}
\sum_{k=0}^{i}(-1)^{i-k}{{N-1 \choose k}}{N-k-2 \choose i- k}(k+1)^{i} =N^{i}. \label{2prove4} 
\end{equation} 
Again, both sides of (\ref{2prove4}) are polynomials of $N$ of the degree $i$. It is sufficient to prove that both sides of (\ref{2prove4}) are equal for $i+1$ different points. We can select these points to be $N=1, \ldots, i+1$. 

Obviously, if $k>N-1$, then ${{N-1 \choose k}}=0$, but also ${N-k-2 \choose i- k}=0$ for $k<N-1$ because $N<i+2$. 
Hence the only non-zero item of the sum is the one corresponding to $k=N-1$, in this case we have 
 \begin{equation*}
(-1)^{i-k}{{N-1 \choose N-1}}{-1 \choose i- k}N^{i} =N^{i}.
\end{equation*} 
\end{proof}

\section{Motivation}

Power-law distributions have been studied in the sciences for a long time, the most prominent example 
being the Gutenberg-Richter law which describes the energy distribution in earthquakes~\cite{Gutenberg1954}. 
Other examples~\cite{Bakb} include forest fires, migratory patterns, infectious diseases, 
solar flares, sandpiles~\cite{Bak1987} and 
neural activity dynamics~\cite{Eurich2002,Beggs2003,Levina2007,Levina2009}.
Some of these examples can be related to critical branching processes~\cite{kolmogorov1947branching} which 
are known to produce power-law event distributions~\cite{Otter1949}.
The relation between power-laws and branching processes usually requires a limit of large systems size~\cite{Levina2008}
which is, however, not relevant when a comparison to numerical computations or mesoscopic experiments is desired.
Nevertheless, the Abelian distribution converges to a power-law (asymptotically for large event sizes 
$L\to \infty$ or as an event density) in the 
exchangeable limits $N\to \infty$ and $\alpha\to 1$. The exponent of the power-law $\gamma = - \frac{3}{2}$ 
is closely obeyed even for small $L$.
Criticality being defined as the divergence of certain physical quantities (such as the mean event size) 
cannot occur in finite systems.  Therefore it is tempting to use the Abelian distribution to define an 
analogon of criticality also for finite systems. 
Depending on the parameters the Abelian distribution has monotonic or non-monotonic behavior, the latter being characterized 
by a relative dominance of events with a size near the size of the system. The two behaviors, the sub- and the supercritical 
regime are separated by a ``critical'' distribution, which is, however, unambiguously defined in terms of a power-law
only for large systems. Avoiding the dependence of the critical parameters on the sample size that may arise when using a 
test (e.g.~Kolmogorov-Smirnov) in order to determine the likelihood of criticality, we propose instead to define criticality 
by qualitative criteria implied by the local similarity to a power-law. Consider the set $A(N)$ of parameters $\alpha$ for 
which the equation $\frac{d^2 \log P_{\alpha,N}(L)}{d(\log L)^2}=0$ has a solution $L\in\{1,\dots,N\}$ for fixed $N<\infty$. 
Expecting $A(N)$ to contract into $\{1\}$ for $N\to \infty$, we can define $A(N)$ as the critical region for a finite system.
Another possibility is to define a single critical value $\alpha_{\mathrm{crit}}$ as an $\sup_{\alpha<1}\left\{\alpha: \frac{d \log P_{\alpha,N}(L)}{d L}<0, \forall L<N \right\}$. This definition uses the property of a critical state to stay between strictly monotonous and non-monotonous regimes.
For all our numerical evaluations we found $\alpha_{\mathrm{crit}}\in A(N)$. 
Thus, the Abelian distribution is one of the few cases where 
the emergence of criticality in an infinite system can be studied explicitly as a 
limit of finite systems which enables a direct comparison with numerical computations or
mesoscopic experiments. 

The Abelian distribution has been studied mainly in the context of neural avalanche dynamics~\cite{Eurich2002,LevinaDiss}, where
it not only turned out be successful in predicting an experimental result from  neuroscience~\cite{Beggs2003}, 
but also allowed for an explicit and exact study of finite size effects. It is interpreted in this context as 
the conditional probability of $L-1$ other neurons being activated given that one neuron just became spontaneously 
active, thus forming an avalanche of $L$ neural action potentials.
From Theorem~\ref{Th:expectation} follows that the expectation exists also in the limit of large $N$ if 
$\alpha<1$ as required by Definition~\ref{dist_def1}. Correspondingly, in the neural system, 
a single nonterminating avalanche is observed at $\alpha=1$. 

The application of the Abelian distribution as an event size distribution may require an appropriate definition
of events. Although neurons produce quasi-discrete action potentials, in the experiments~\cite{Beggs2003} events have been defined
by threshold crossings, where an invariance of the distribution of the choice of the threshold is required for justification.
In other time series, events can be defined either in a similar way. While the parameter $N$ has usually a natural interpretation
as the size of the system, for e.g.~financial time series its meaning is less obvious. If $N$ can be found by maximum likelihood, it
can be interpreted as an effective system size. The parameter $\alpha$ describes in all cases the strength of the interaction
between the elements in the system. If the elements are not all connected or if the system is heterogeneous, it seems reasonable to 
use, respectively, connectivity-rescaled parameters or an average interaction strength to determine estimates of this parameter.

\section{Open questions}
A large number of questions related to the Abelian distribution are left for future investigation. Most important among them are the higher moments,
characteristic function, stability and properties related to parameter estimation. Especially interesting for the application to critical system would be
a scaling law for the critical value $\alpha_{\mathrm{crit}}$ and relation between different possibilities to define ctiticality for finite system. 

\section*{Acknowledgments}
The authors wish to thank Zakhar Kablutschko and Theo Geisel for helpful discussions and 
Manfred Denker for valuable comments, help and support.
Supported by the Federal Ministry of Education and Research (BMBF) Germany under grant number 01GQ1005B.


\begin{thebibliography}{10}
	
	\bibitem{Bakb}
	P.~Bak.
	\newblock {\em How Nature Works: The Science of Self-Organized Criticality}.
	\newblock Springer Verlag, 1999.
	
	\bibitem{Bak1987}
	P.~Bak, C.~Tang, and K.~Wiesenfeld.
	\newblock Self-organized criticality: {A}n explanation of $1/f$ noise.
	\newblock {\em Phys. Rev. Lett.}, 59:381--384, 1987.
	
	\bibitem{Beggs2003}
	J.~Beggs and D.~Plenz.
	\newblock Neuronal avalanches in neocortical circuits.
	\newblock {\em J. Neurosci}, 23:11167--11177, 2003.
	
	\bibitem{Denker2011}
	M.~Denker and A.~Rodrigues.
	\newblock The combinatorics of avalanche dynamics.
	\newblock {\em arXiv preprint arXiv:1111.5071}, 2011.
	
	\bibitem{Eurich2002}
	C.~W. Eurich, M.~Herrmann, and U.~Ernst.
	\newblock Finite-size effects of avalanche dynamics.
	\newblock {\em Phys. Rev. E}, 66:066137--1--15, 2002.
	
	\bibitem{Gutenberg1954}
	B.~Gutenberg and C.F. Richter.
	\newblock {\em Seismicity of the Earth and Associated Phenomena}.
	\newblock Princeton University Press, Princeton, N.J., 2nd edition, 1954.
	
	\bibitem{kolmogorov1947branching}
	A.~N. Kolmogorov and N.~A. Dmitriev.
	\newblock Branching stochastic processes.
	\newblock {\em Doklady Akademii Nauk SSSR}, 56(1):5--8, 1947.
	
	\bibitem{LevinaDiss}
	A.~Levina.
	\newblock {\em A mathematical approach to self-organized criticality in neural
		networks}.
	\newblock Nieders. Staats- u. Universit\"atsbibliothek G\"ottingen, 2008.
	\newblock Dissertation (Ph.D. thesis),
	webdoc.sub.gwdg.de/diss/2008/levina/levina.pdf.
	
	\bibitem{Levina2008}
	A.~Levina, J.~M. Herrmann, and M.~Denker.
	\newblock Critical branching processes in neural networks.
	\newblock {\em PAMM}, 7(1):1030701--1030702, 2008.
	
	\bibitem{Levina2007}
	A.~Levina, J.~M. Herrmann, and T.~Geisel.
	\newblock Dynamical synapses causing self-organized criticality in neural
	networks.
	\newblock {\em Nat. Phys.}, 3:857--860, 2007.
	
	\bibitem{Levina2009}
	A.~Levina, J.~M. Herrmann, and T.~Geisel.
	\newblock Phase transitions towards criticality in a neural system with
	adaptive interactions.
	\newblock {\em Phys. Rev. Lett.}, 102(11):118110, 2009.
	
	\bibitem{Otter1949}
	R.~Otter.
	\newblock The multiplicative process.
	\newblock {\em Ann. Math. Statist.}, 20:248--263, 1949.
	
	\bibitem{Saslaw1989}
	W.~C. Saslaw.
	\newblock Some properties of a statistical distribution function for galaxy
	clustering.
	\newblock {\em Astrophys. J.}, 341:588--598, 1989.
	
\end{thebibliography}
\end{document}